\documentclass{amsart}
\usepackage{amscd}
\usepackage{amsfonts}
\usepackage{amsmath}
\usepackage{amssymb}
\usepackage{amsthm}
\usepackage{fancyhdr}
\usepackage{latexsym}

\newtheorem{theorem}{Theorem}
\newtheorem{lemma}[theorem]{Lemma}

\newcommand{\R}{\mathbb R}

\newcommand{\p}{\partial}

\begin{document}

\title[Ill-posedness for the FORQ equation]{Ill-posedness in the critical Sobolev space for the Fokas-Olver-Rosenau-Qiao equation}

\author{Dan-Andrei Geba}

\address{Department of Mathematics, University of Rochester, Rochester, NY 14627, U.S.A.}
\email{dangeba@math.rochester.edu}
\date{}

\begin{abstract}
This article proves norm inflation in the critical Sobolev space  $H^{5/2}(\R)$ for  the Fokas-Olver-Rosenau-Qiao equation, which is a modified Camassa-Holm-type equation with cubic nonlinearity. This result complements the well-posedness theory for this equation, which was previously known to be locally well-posed in $H^{s}(\R)$ for $s>5/2$. The proof relies on the construction of explicit initial data satisfying the previously known blow-up criteria for the Fokas-Olver-Rosenau-Qiao  equation, a step that appears to be of independent interest.
\end{abstract}

\subjclass[2000]{35Q53, 37K10}
\keywords{Fokas-Olver-Rosenau-Qiao equation, Camassa-Holm-type equations, Cauchy problem, well-posedness, ill-posedness, norm inflation.}

\maketitle


\section{Introduction}

In this paper, we study the Cauchy problem for the Fokas-Olver-Rosenau-Qiao (FORQ) equation on the real line:
\begin{equation}
\begin{cases}
u_t-u_{txx}+3u^2u_x-u_x^3-4uu_xu_{xx}+2u_xu_{xx}^2-u^2u_{xxx}+u_x^2u_{xxx}=0,\\
u(x,0)=u_0(x), \qquad x\in \R,
\end{cases}
\label{cp}
\end{equation}
and show that it is ill-posed (IP) in the Sobolev space $H^{5/2}(\R)$.

Introducing the momentum variable
\[
m=u-u_{xx},
\]
the FORQ equation can be written in the local form
\begin{equation}
m_t+\bigl((u^2-u_x^2)m\bigr)_x=0.
\label{floc}
\end{equation}
It also admits the nonlocal formulation
\begin{equation}
u_t+u^2u_x-\frac{1}{3}u_x^3
+\frac{1}{3}(1-\p_x^2)^{-1}{u_x^3}
+\frac{1}{3}\p_x(1-\p_x^2)^{-1}
\left\{\frac{2}{3}u^3+uu_x^2\right\}=0.\label{fnonloc}
\end{equation}

The FORQ equation was independently derived by Fokas \cite{F95}, Olver and Rosenau \cite{OR96}, Fuchssteiner \cite{F96}, and Qiao \cite{Q06}. Fokas introduced the equation as an integrable generalization of the modified Korteweg-de Vries equation, whereas Qiao obtained it through an approximation procedure for the two-dimensional Euler equations. Throughout this work, by \emph{integrable} we mean that the equation possesses infinitely many conservation laws, admits a Lax pair and a bi-Hamiltonian formulation, and can be solved by the inverse scattering method.

The FORQ equation also belongs to a family of integrable Camassa-Holm-type equations with cubic nonlinearities introduced by Novikov \cite{N09}. A notable member of this family is the Novikov equation,
\begin{equation}
u_t+u^2u_x
+\frac{1}{2}(1-\p_x^2)^{-1}{u_x^3}
+\frac{3}{2}\p_x(1-\p_x^2)^{-1}
\left\{\frac{2}{3}u^3+uu_x^2\right\}=0.\label{ne-nonloc}
\end{equation}
For comparison, the classical Camassa-Holm equation, which contains quadratic nonlinearities, can be written as
\begin{equation*}
m_t+um_x+2u_xm=0,
\qquad m=u-u_{xx}.
\end{equation*}
Comparing this expression with \eqref{floc} highlights the structural similarities between the Camassa-Holm and the FORQ equations.

A remarkable feature shared by the FORQ, Novikov, and Camassa-Holm equations is the existence of peaked traveling-wave solutions, known as \emph{peakons}, on both the real line and the circle. For the FORQ equation on $\R$, the peakon solution is given by
\[
u_c(t,x)=\sqrt{\frac{3c}{2}}\,e^{-|x-ct|},
\]
where $c>0$ denotes the wave speed.

The well-posedness (WP) theory in the sense of Hadamard - namely, existence, uniqueness, and continuous dependence on the initial data - for nonlinear evolution equations remains one of the most active and challenging areas of research in the theory of partial differential equations. Camassa-Holm-type equations belong to this class, and the WP theory of the FORQ equation has attracted considerable attention.

Fu, Gui, Liu, and Qu \cite{Liu13-2} established local WP in $H^s(\R)$ for $s>5/2$ and in the critical Besov space $B^{5/2}_{2,1}(\R)$. They also analyzed blow-up scenarios and proved the nonexistence of smooth traveling-wave solutions. In a subsequent work, Gui, Liu, Olver, and Qu \cite{Liu13} investigated singularity formation and showed that singularities can occur only through wave breaking. Later, Himonas and Mantzavinos \cite{HM14} provided an alternative proof of local WP in $H^s(\R)$ for $s>5/2$ and extended the result to the periodic setting. They further established that the data-to-solution map fails to be uniformly continuous in both $H^s(\R)$ and $H^s(\mathbb{T})$ whenever $s<3/2$ or $s>5/2$. Given the quasilinear nature of the FORQ equation, this phenomenon is not unexpected. Subsequently, Himonas and Holliman \cite{HH19} showed that uniqueness fails on both the line and the circle for $s<3/2$. Taken together, these results leave the range
\[
\frac{3}{2}\le s\le \frac{5}{2}
\]
as a gap in the known WP theory for the FORQ equation.

Our main result closes the endpoint $s=5/2$ by proving norm inflation in $H^{5/2}(\R)$. Consequently, the data-to-solution map cannot be continuous at this regularity, establishing $s=5/2$ as the critical Sobolev index for the WP of the Cauchy problem \eqref{cp}. More precisely, we prove the following.

\begin{theorem}
For every $\epsilon>0$, there exists initial data $u_0\in H^\infty(\R)$ satisfying
\[
\|u_0\|_{H^{5/2}(\R)}\leq \epsilon
\]
such that the Cauchy problem \eqref{cp} admits a unique solution
\[
u\in C([0,T);H^\infty(\R))
\]
with maximal lifespan $T<\epsilon$ and
\[
\limsup_{t\to T^-}\|u(t)\|_{H^{5/2}(\R)}=\infty.
\]
\label{main}
\end{theorem}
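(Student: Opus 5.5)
We will prove Theorem~\ref{main} by exhibiting small smooth data for which a known wave-breaking criterion applies with a very short blow-up time, and then showing that blow-up in this setting forces the $H^{5/2}$ norm to blow up.

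\textbf{Step 1: what can blow up.} We would combine the local well-posedness theory of Fu, Gui, Liu and Qu \cite{Liu13-2} with the analysis of Gui, Liu, Olver and Qu \cite{Liu13}. For $u_0\in H^\infty(\R)$ these give a unique maximal solution $u\in C([0,T);H^\infty(\R))$, and $T<\infty$ can happen only through wave breaking. Concretely, $m=u-u_{xx}$ is transported along the flow of $(u^2-u_x^2)$ by \eqref{floc}. Moreover, $\|m\|_{L^1}$ is conserved along characteristics once the sign of $m$ is fixed. The blow-up criterion then reads
\[
T<\infty \iff \int_0^T \|m(t)\|_{L^\infty}^2\,dt=\infty,
\]
or an equivalent statement with $\|u_x m\|_{L^\infty}$. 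By Sobolev embedding, $\|m\|_{L^\infty}\lesssim \|u\|_{H^{5/2+\delta}}$ but not $\|u\|_{H^{5/2}}$. So we must check that blow-up already happens at the $H^{5/2}$ level. The plan is to use the criterion for the Besov space $B^{5/2}_{2,1}$ together with a logarithmic (Brezis--Gallouet/Kato--Ponce type) estimate. This gives
\[
\frac{d}{dt}\|u\|_{H^{s}}^2\lesssim \|m\|_{L^\infty}\|u_x\|_{L^\infty}\|u\|_{H^s}^2 .
\]
The estimate is first run at the level $s=5/2+\delta$; a commutator argument then shows that if $\|u(t)\|_{H^{5/2}}$ stayed bounded on $[0,T)$, then $\|m\|_{L^\infty}$ would stay bounded. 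That would contradict $T<\infty$. Hence $\limsup_{t\to T^-}\|u(t)\|_{H^{5/2}}=\infty$.

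\textbf{Step 2: the blow-up mechanism.} We would use the Riccati-type inequality from \cite{Liu13} along the characteristic $q(t,x_0)$. Set $M=(u^2-u_x^2)_x m = 2u_x m\cdot m$, or more precisely consider the quantity $u_x m$ at a point where $m_0(x_0)>0$. The evolution yields
\[
\frac{d}{dt}(u_x m)(t,q)\le -\tfrac12 (u_x m)^2 + C\|u_0\|_{H^1}^2\, m^2\,\cdots,
\]
so we will use the criterion in the form proven there: if $m_0(x_0)>0$ and $u_x(0,x_0)m_0(x_0)<-K(u_0)$, with $K$ depending only on $\|u_0\|_{H^1}$ or on $\|m_0\|_{L^1}$, then $T\lesssim 1/|u_{0,x}m_0(x_0)|$.

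\textbf{Step 3: the construction (main obstacle).} The key step is to build $u_0$ with $\|u_0\|_{H^{5/2}}\le\epsilon$, $\|u_0\|_{H^1}$ and $\|m_0\|_{L^1}$ tiny, and $|u_{0,x}m_0|(x_0)$ huge. This is exactly possible because $H^{5/2}$ fails to embed into $W^{2,\infty}$. We would take a logarithmic profile, for instance
\[
u_0 = \epsilon' \sum_{k=N}^{N^2}\frac{1}{k}\,2^{-5k/2}\phi_k(x),
\]
with $\phi_k$ frequency-localized near $2^k$ and arranged coherently at $x_0$ so that $u_{0,xx}(x_0)\sim -\epsilon'\log N$ while $\|u_0\|_{H^{5/2}}\sim \epsilon'(\sum 1/k^2)^{1/2}\lesssim \epsilon'/\sqrt N$. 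A simpler alternative is a profile such as $u_0(x)=\epsilon' x\,(\log\log(1/|x|))^{\alpha}\chi(x)$, with the singularity smoothed at scale $h$. In either case we also add a gradient term so that $u_{0,x}(x_0)$ is of size $\epsilon'$ with the correct sign.

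The hard part is arranging the signs. We need $m_0(x_0)>0$, which requires $u_{0,xx}(x_0)\ll 0$, together with $u_{0,x}(x_0)<0$. At the same time we must keep control of the threshold $K(u_0)$ in Step 2, which requires checking that the lower-order norms are $O(\epsilon')$ uniformly in $N$. Once this is done, $|u_{0,x}m_0(x_0)|\gtrsim \epsilon'^2\log N$ can be made arbitrarily large, so $T<\epsilon$ for $N$ large. Step 1 then gives the norm inflation.
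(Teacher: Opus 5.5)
\textbf{Step 1.} Your Step 1 is, in substance, the paper's Section 4: a logarithmic interpolation inequality, then log-Gronwall, then $H^{5/2}\hookrightarrow B^2_{\infty,\infty}$. The paper runs it at the $H^3$ level, where $\frac{d}{dt}\|m\|_{H^1}^2\le 10\|u_xm\|_{L^\infty}\|m\|_{H^1}^2$ comes from an exact computation, so no fractional commutator estimates at $s=5/2+\delta$ are needed.

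\textbf{The gap: Steps 2--3.} The wave-breaking criterion you cite from \cite{Liu13} (Theorem~\ref{blow-th} in the paper) requires $m_0=(1-\p_x^2)u_0\ge 0$ on all of $\R$, not just $m_0(x_0)>0$. This global sign is what gives $|u_x|\le u$ for all time, and hence lets the forcing along characteristics be bounded by $\|u_0\|_{H^1}^3$. The threshold is also different from yours: it is $u_{0,x}(x_0)^2m_0(x_0)>\sqrt2\|u_0\|_{H^1}^3$, not a lower bound on $-u_{0,x}(x_0)m_0(x_0)$ (this difference is harmless in your scaling). Your Riccati display, whose forcing contains $m^2$, does not close as written.

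More importantly, your data are not built to satisfy the sign hypothesis. Each block $\phi_k$ has Fourier support away from the origin, so the high-frequency part of $m_0$ has zero integral and necessarily changes sign. You only arrange signs at the single point $x_0$. Making $m_0\ge0$ everywhere would need an extra positive component dominating all the negative parts, which you neither include nor verify; adding a gradient term does not address this. (Separately, your alternative profile $\epsilon' x(\log\log(1/|x|))^\alpha$ is not in $H^{5/2}$: its derivative is unbounded, while $H^{3/2}(\R)\subset L^\infty$.)

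\textbf{How the paper handles it.} The paper resolves exactly this point by prescribing $m_0$ rather than $u_0$: $m_0=\psi+\sum_{j\le K}\frac1j\phi(2^jx)$, with $\phi\ge0$ even and $\psi\ge0$ supported in $(1,2)$. Then $m_0\ge0$ on $\R$ and $m_0(0)=\sum_{j\le K}1/j\to\infty$. Meanwhile $u_0'(0)=-\frac12\int_1^2e^{-\xi}\psi(\xi)\,d\xi$ is a fixed negative number, because the even bumps contribute nothing to $G'*m_0$ at $0$. The uniform $H^{5/2}$ bound comes from the almost-orthogonality $|\langle a_j,a_k\rangle_{H^{1/2}}|\lesssim 2^{-|j-k|}/(jk)$.

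\textbf{Alternative repair.} You could keep your data by proving a sign-free criterion instead. Since $m(t,q)\,q_x=m_0$ along the flow of $u^2-u_x^2$, the norm $\|m(t)\|_{L^1}$ is conserved for every smooth solution, with no sign assumption. Hence $\|u\|_{L^\infty},\|u_x\|_{L^\infty}\le\frac12\|m_0\|_{L^1}$. Along characteristics one has $\frac{d}{dt}u_x=(u^2-u_x^2)u-G*\bigl((u^2-u_x^2)m\bigr)=O(\|m_0\|_{L^1}^3)$ and $\frac{d}{dt}(1/m)=2u_x$. This forces blow-up before time $1/(-u_{0,x}(x_0)m_0(x_0))$ once $u_{0,x}(x_0)^2m_0(x_0)$ exceeds a fixed multiple of $\|m_0\|_{L^1}^3$, which your data satisfy for large $N$. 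That argument is absent from your proposal, however; you even state that $L^1$ conservation requires a fixed sign.
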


A natural direction for future work is to determine whether an analogous norm-inflation phenomenon occurs in the periodic setting. Another important question is to identify the mechanism responsible for the IP of \eqref{cp} in the range $3/2\le s<5/2$, whether it arises from nonuniqueness, failure of continuity of the data-to-solution map, or some other phenomenon.

It is also instructive to compare the WP theory of the FORQ equation with those of the Camassa-Holm and Novikov equations. Remarkably, both equations have the critical Sobolev index $s=3/2$ on the line and on the circle, as recently established by Guo, Liu, Molinet, and Yin \cite{GLM19}. For the Camassa-Holm equation, this is not particularly surprising since its quadratic nonlinearity is generally more tractable than the cubic nonlinearity of the FORQ equation. By contrast, the corresponding result for the Novikov equation is somewhat unexpected. Indeed, the nonlocal formulations \eqref{fnonloc} and \eqref{ne-nonloc} of the FORQ and Novikov equations differ, for all practical purposes, only by the additional term $-\frac{1}{3}u_x^3$ appearing in the FORQ equation.
 
 Next, we comment on the strategy underlying our main result. A careful reading of Guo et al. \cite{GLM19} reveals a general roadmap for establishing IP through norm inflation for certain nonlinear evolution equations. The essential ingredients are: (i) a comprehensive WP theory, (ii) a collection of suitable energy estimates (typically obtained as part of the WP analysis), and (iii) a blowup result with sufficiently precise control of the blowup time. While the first two components are by now relatively standard, the blowup analysis is often considerably more delicate. Such results are usually formulated in terms of a set of conditions on the initial data that guarantee finite-time blowup. What is sometimes lacking, however, is an explicit example of initial data satisfying these conditions. This was the case for the Novikov equation until Guo et al. \cite{GLM19} constructed concrete blowup data. The situation for the FORQ equation is similar: both Fu et al. \cite{Liu13-2} and Gui et al. \cite{Liu13} provide only sufficient conditions for blowup. Beyond the main theorem, a further contribution of the present work is the construction of explicit blowup data for the FORQ equation.

The paper is organized as follows. In Section 2, we review known results concerning WP and finite-time blowup for the FORQ equation, and collect several technical lemmas that will be used throughout the proof. In Section 3, we construct the initial data that lead to blowup and, ultimately, to norm inflation. Finally, in Section 4, we combine the results of the previous sections to establish the main theorem by contradiction.

Throughout the paper, we use the standard notation $A\lesssim B$ to denote the inequality $A\le cB$, where $c>0$ is a constant. Likewise, $A\sim B$ means that both $A\lesssim B$ and $B\lesssim A$ hold. Finally, $A\ll B$ indicates that $A\le cB$ for a sufficiently small constant $c>0$.


\section{Toolbox}

To state the relevant known results for the FORQ equation, we first recall the definitions of Besov and Sobolev spaces. Let $\mathcal{S}'(\R)$ denote the space of tempered distributions on $\R$, and define the Fourier transform of a function $f:\R\to\mathbb{C}$ by
\begin{equation*}
\widehat{f}(\xi):= \int_\R e^{-ix\xi} f(x)\,dx.
\end{equation*}
Let $\varphi: \R\to [0,1]$ to be a smooth, radially decreasing function such that
\begin{equation*}
\varphi(\xi)=\begin{cases}
1, \quad |\xi|\leq 5/4,\\
0, \quad |\xi|\geq 8/5.
\end{cases}
\end{equation*}
For each integer $k\geq 0$, we define the Littlewood-Paley projector $P_k$ by
\begin{equation*}
\widehat{P_0f}(\xi):=\varphi(\xi) \widehat{f}(\xi), \qquad \widehat{P_kf}(\xi):=\left(\varphi(\xi/2^k)- \varphi(\xi/2^{k-1})\right) \widehat{f}(\xi), \quad \forall\,k\geq 1.
\end{equation*}
The Besov space $B^s_{p,q}(\R)$ is then defined by
\begin{equation*}
B^s_{p,q}(\R):=\left\{f\in \mathcal{S}'(\R);\ \|f\|_{B^s_{p,q}}:= \left\|2^{ks}\|P_k f\|_{L^p}\right\|_{l^q_k}<\infty\right\}.
\end{equation*}
It is well known that the Sobolev space 
\begin{equation*}
H^s(\R):=\left\{f\in \mathcal{S}'(\R);\ \|f\|_{H^s}:= \left(\int_\R (1+|\xi|^2)^s|\widehat{f}(\xi)|^2\,d\xi\right)^{1/2}<\infty\right\}.
\end{equation*}
coincides with the Besov space $B^s_{2,2}(\R)$.

We now recall the main WP results for the FORQ equation obtained in \cite{Liu13-2}, \cite{Liu13}, and \cite{HM14}:

\begin{theorem}
Let $s>5/2$ and $u_0\in H^s(\R)$. Then there exists a maximal time $T\geq \overline{T}(\|u_0\|_{H^{5/2+}})>0$, and a unique solution 
\begin{equation*}
u\in C([0,T); H^s(\R))\cap C^1([0,T); H^{s-1}(\R))
\end{equation*}
to the Cauchy problem \eqref{cp}, which depends continuously on the initial data $u_0$.

The lifespan $T$ is independent of the Sobolev index $s$ of the initial data $u_0$. In particular, if $u_0\in H^\infty(\R)$, then $u\in C([0,T); H^\infty(\R))$. Finally, if $T<\infty$, then
\begin{equation*}
\limsup_{t\to T^-}\|u(t)\|_{H^s}=\infty.
\end{equation*}
\label{wp-th} 
\end{theorem}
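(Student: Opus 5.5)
The plan is to argue entirely at the level of the momentum $m=u-u_{xx}$, because the cubic structure of \eqref{floc} becomes a clean transport equation there. Since
\[
(u^2-u_x^2)_x=2uu_x-2u_xu_{xx}=2u_xm,
\]
equation \eqref{floc} is equivalent to
\[
m_t+(u^2-u_x^2)\,m_x+2u_x m^2=0,\qquad u=(1-\p_x^2)^{-1}m.
\]
Setting $\sigma=s-2>1/2$, the statement $u\in H^s$ is the same as $m\in H^\sigma$. By Sobolev embedding, $m$ is then controlled in $L^\infty$, and so is $u$ in $W^{2,\infty}$. This is exactly where the threshold $s>5/2$ enters.

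\textbf{Step 1: the a priori estimate.} Apply $\Lambda^\sigma=(1-\p_x^2)^{\sigma/2}$ to the transport equation, pair it with $\Lambda^\sigma m$, and set $a=u^2-u_x^2$.
\begin{itemize}
\item The transport term splits as
\[
\int a\,\p_x(\Lambda^\sigma m)\,\Lambda^\sigma m\,dx+\int [\Lambda^\sigma,a]\p_x m\,\Lambda^\sigma m\,dx .
\]
\item The first piece is handled by integrating by parts and using $a_x=2u_xm$.
\item The second piece is handled by the Kato--Ponce commutator estimate $\|[\Lambda^\sigma,a]\p_x m\|_{L^2}\lesssim \|a_x\|_{L^\infty}\|m\|_{H^\sigma}+\|a_x\|_{H^\sigma}\|m\|_{L^\infty}$.
\item The product $u_xm^2$ is handled by Moser estimates, using that $u_x$ and $u$ are smoother than $m$ by one and two derivatives respectively.
\end{itemize}
The target inequality is
\[
\frac{d}{dt}\|m\|_{H^\sigma}\lesssim \bigl(\|u\|_{W^{1,\infty}}+\|m\|_{L^\infty}\bigr)\|m\|_{L^\infty}\,\|m\|_{H^\sigma}.
\]
This bound is linear in the top norm. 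Gronwall then gives three consequences:
\begin{itemize}
\item a lifespan bound $T\ge \overline T$ depending only on $\|m_0\|_{H^{1/2+}}\sim\|u_0\|_{H^{5/2+}}$;
\item persistence of every higher $H^{s'}$ norm for as long as $\|m\|_{L^\infty}$ stays bounded, so the lifespan is independent of $s$ and $H^\infty$ data yield $H^\infty$ solutions;
\item the blow-up alternative: if $T<\infty$ then $\|u(t)\|_{H^s}\to\infty$ along a sequence, since otherwise the solution could be continued.
\end{itemize}

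\textbf{Step 2: existence and uniqueness.} For existence, regularize with Friedrichs mollifiers, $m^\varepsilon_t+J_\varepsilon\bigl(a^\varepsilon\p_xJ_\varepsilon m^\varepsilon\bigr)+2u^\varepsilon_x(m^\varepsilon)^2=0$. This is an ODE on $H^\sigma$, and it obeys the Step 1 estimate uniformly in $\varepsilon$. Then show the family is Cauchy in the weaker norm $H^{\sigma-1}$ (or $L^2$). The same difference estimate gives uniqueness. For two solutions with difference $w=m_1-m_2$, the worst term is $a_1\,w_x$, which is again transport and is absorbed by integrating by parts. The remaining terms, such as $(a_1-a_2)\p_x m_2$, cost one derivative on $m_2$. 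This loss is acceptable because $m_2\in H^\sigma$ and the difference is measured in $H^{\sigma-1}$. Interpolating with the uniform $H^\sigma$ bound gives convergence in $C([0,T);H^{\sigma'})$ for every $\sigma'<\sigma$. Weak-star compactness plus time reversibility then upgrades this to $C([0,T);H^\sigma)\cap C^1([0,T);H^{\sigma-1})$.

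\textbf{Step 3: continuous dependence (main obstacle).} I expect this to be the hardest step, because the difference estimate only holds in the weaker norm. I would use the Bona--Smith argument:
\begin{enumerate}
\item Approximate $u_0$ by $J_\varepsilon u_0$.
\item Use the extra regularity of the smoothed solutions, with $\|J_\varepsilon u_0\|_{H^{s+1}}\lesssim\varepsilon^{-1}$, balanced against the rate $\|J_\varepsilon u_0-u_0\|_{H^{s-1}}=o(\varepsilon)$.
\item Close the estimate for the difference in $H^\sigma$ itself.
\end{enumerate}
The delicate term is $(a_1-a_2)\p_x m_2$ in $H^\sigma$, which requires $m_2\in H^{\sigma+1}$. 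That is exactly what the regularized solutions supply, at the cost $\varepsilon^{-1}$, and it is compensated by the $o(\varepsilon)$ smallness of the low-norm difference.
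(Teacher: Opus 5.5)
The paper does not prove this theorem. It quotes it from \cite{Liu13-2}, \cite{Liu13} and \cite{HM14} and uses it as a black box, so your sketch is an independent reconstruction rather than a variant of an argument in the text. As a reconstruction it is sound.

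Writing \eqref{floc} as a transport equation for $m$ leaves only the smoothing map $u=(1-\p_x^2)^{-1}m$ as a nonlocal ingredient. Since $\|u\|_{W^{1,\infty}}\le\|m\|_{L^\infty}$, your a priori bound is really $\frac{d}{dt}\|m\|_{H^\sigma}\lesssim\|m\|_{L^\infty}^2\|m\|_{H^\sigma}$. This one inequality gives the $H^{5/2+}$ lifespan and persistence of higher regularity, hence the $s$-independence of $T$ and the $H^\infty$ statement. It also gives the blow-up alternative, in the sharper form that the solution continues as long as $\int_0^T\|m\|_{L^\infty}^2\,dt<\infty$. It is the fractional analogue of the $H^1$ identity \eqref{y-en} that the paper itself derives in Section 4.

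The cited works reach the statement by other means. \cite{Liu13-2} uses Littlewood--Paley and transport theory in Besov spaces for the $m$-equation, which also yields the critical $B^{5/2}_{2,1}$ result. \cite{HM14} uses a Galerkin-type energy scheme for the nonlocal form \eqref{fnonloc}, which also covers the circle. Citing therefore buys brevity plus those extras. Your route buys a self-contained argument in a single unknown.

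Two steps need more care than you give them.

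First, the commutator bound $\|[\Lambda^\sigma,a]\p_x m\|_{L^2}\lesssim\|a_x\|_{L^\infty}\|m\|_{H^\sigma}+\|a_x\|_{H^\sigma}\|m\|_{L^\infty}$ is not the textbook Kato--Ponce inequality. That inequality, applied with $g=\p_x m$, puts $\|\p_x m\|_{L^\infty}$ on the right, and $H^\sigma$ does not control this for $\sigma\le 3/2$. Your version holds for $\sigma>0$; a variant with $\|a\|_{W^{1,\infty}}$ and $\|a\|_{H^{\sigma+1}}$ is equally good here, since $\|a\|_{L^\infty}\le 2\|m\|_{L^\infty}^2$. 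But you must derive it rather than quote it. One way is to start from $[\Lambda^\sigma,a]\p_x m=\p_x[\Lambda^\sigma,a]m-[\Lambda^\sigma,a_x]m$. Treat the second piece by the Moser estimate. Treat the first by the paraproduct bound $\|[\Lambda^\sigma,a]m\|_{H^1}\lesssim\|a\|_{W^{1,\infty}}\|m\|_{H^\sigma}+\|a\|_{H^{\sigma+1}}\|m\|_{L^\infty}$.

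Second, in Step 2 the mollifier errors such as $(J_\varepsilon-J_{\varepsilon'})(a^\varepsilon\p_x J_\varepsilon m^\varepsilon)$ are bounded but not a priori small in $H^{\sigma-1}$. They are $O(\max(\varepsilon,\varepsilon')^\delta)$ in $H^{\sigma-1-\delta}$. So run the Cauchy estimate there with $0<\delta<\sigma-\frac12$, where the transport commutator (which needs index $>-\frac12$) and the product estimates still close. Alternatively, replace this step by a compactness argument. The $H^{\sigma-1}$ difference estimate between two genuine solutions, which is what uniqueness and Bona--Smith use, is fine as you state it. Your alternative ``or $L^2$'' works only for $\sigma\ge1$, because for $\sigma<1$ the term $(a_1-a_2)\p_x m_2\notin L^2$.
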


Next, we recall a finite-time blowup criterion established in \cite{Liu13}.

\begin{theorem}
Let $w_0\in H^s(\R)$ with $s>5/2$. Suppose there exists $x_0\in \R$ such that
\[
z_{0}:=(1-\partial^2_x)w_{0}\geq 0\ \text{on}\ \R, \qquad z_{0}(x_0)>0,
\]
and
\[
w'_0(x_0)<-\left(\frac{\sqrt{2}\|w_0\|^3_{H^1}}{z_{0}(x_0)}\right)^{1/2}.
\]
Then the solution to the Cauchy problem \eqref{cp} with $u_0=w_0$ blows up in finite time $T$, with the upper bound 
\begin{equation*}
T\leq \,\frac{-w'_0(x_0)}{\sqrt{2}\|w_0\|^3_{H^1}}- \frac{1}{2}\sqrt{\left(\frac{\sqrt{2}w'_0(x_0)}{\|w_0\|^3_{H^1}}\right)^2-\frac{2\sqrt{2}}{\|w_0\|^3_{H^1}\,z_0(x_0)}}.
\end{equation*}
\label{blow-th}
\end{theorem}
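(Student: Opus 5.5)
The plan is to run the standard argument for Camassa-Holm-type equations along characteristics: reduce blowup to a Riccati-type differential inequality for the pair $\bigl(u_x,m\bigr)$ evaluated along the characteristic issuing from $x_0$. By Theorem \ref{wp-th} we may work with a classical solution $u$ on $[0,T)$, with $m=u-u_{xx}$.

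\textbf{Step 1 (characteristics and sign of $m$).} Define the flow
\[
q_t(t,x)=\bigl(u^2-u_x^2\bigr)(t,q(t,x)),\qquad q(0,x)=x,
\]
which gives an increasing diffeomorphism of $\R$ as long as the solution exists. From the local form \eqref{floc} one gets $\frac{d}{dt}\bigl[m(t,q)\,q_x\bigr]=0$, so $m(t,q(t,x))\,q_x(t,x)=z_0(x)$. Hence $m\geq 0$ for all times and $m(t,q(t,x_0))>0$.

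\textbf{Step 2 (a priori pointwise bounds).}
\begin{itemize}
\item The $H^1$ norm is conserved, since $\int (u^2+u_x^2)\,dx=\int u\,m\,dx$ is a conservation law of the FORQ equation.
\item Writing $u=\tfrac12 e^{-|x|}*m$ with $m\geq 0$ gives $|u_x|\leq u$.
\item The Sobolev inequality gives $\|u\|_{L^\infty}\leq \|u\|_{H^1}/\sqrt2=\|w_0\|_{H^1}/\sqrt2$.
\end{itemize}

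\textbf{Step 3 (ODE system along the characteristic from $x_0$).} Set $M(t)=u_x(t,q(t,x_0))$ and $\mu(t)=m(t,q(t,x_0))$. Since $(u^2-u_x^2)_x=2u_xm$, the transport equation gives
\[
\mu'=-2M\mu^2 .
\]
For $M$, differentiate the nonlocal form \eqref{fnonloc} in $x$. The result is that $M'$ equals a combination of cubic local terms in $u,u_x$ plus convolutions of $\tfrac12 e^{-|x|}$ with cubic expressions in $u,u_x$. Using $|u_x|\leq u$ and $m\geq 0$, the local terms and the convolution terms should combine into
\[
M'(t)\leq \sqrt2\,\|w_0\|_{H^1}^3=:A .
\]
The estimate will be organised as: bound each convolution piece by $\|u\|_{L^\infty}\|u\|_{H^1}^2$ (or by $\|u\|_{L^\infty}^3$), then insert $\|u\|_{L^\infty}\leq\|w_0\|_{H^1}/\sqrt2$. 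This step is the main obstacle. The constant must come out exactly as $\sqrt2\|w_0\|^3_{H^1}$, and I would first try to exploit the identity $u^2-u_x^2=(u-u_x)(u+u_x)\ge0$ together with the standard convolution inequalities $p*(u^2+\tfrac12u_x^2)\geq \tfrac12u^2$ from Camassa-Holm theory.

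\textbf{Step 4 (closing the Riccati argument).} Integrating gives $M(t)\leq M_0+At$, where $M_0=w_0'(x_0)<0$. Then $(1/\mu)'=2M\leq 2M_0+2At$, so
\[
\frac1{\mu(t)}\leq \frac1{z_0(x_0)}+2M_0t+At^2 .
\]
The hypothesis $M_0^2>A/z_0(x_0)$ is exactly the condition that this quadratic has a real positive root. Its smaller root is
\[
t_*=-\frac{M_0}{A}-\sqrt{\frac{M_0^2}{A^2}-\frac{1}{A\,z_0(x_0)}} ,
\]
which coincides with the stated bound. On $[0,t_*)$ the bound on $M$ keeps $M<0$, so $1/\mu$ is forced to reach $0$ no later than $t_*$; that is, $\mu$ blows up at or before $t_*$. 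Since $\mu\leq\|m\|_{L^\infty}\lesssim\|u\|_{H^{s}}$ for $s>5/2$, Theorem \ref{wp-th} yields $T\leq t_*$.
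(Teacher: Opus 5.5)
The paper gives no proof of this statement; it only quotes it from \cite{Liu13}. Your scheme is the natural route: transport of $m$ along $q_t=u^2-u_x^2$, the identity $(1/\mu)'=2M$, and a one-sided bound $M'\le A$. It reproduces exactly the stated hypothesis, and the stated time bound appears as the smaller root of $At^2+2M_0t+1/z_0(x_0)$. Steps 1, 2 and 4 are sound.

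The one substantive step you leave undone is Step 3, and you misjudge it: the constant does not need to ``come out exactly'' as $\sqrt2\|w_0\|_{H^1}^3$. Any bound $M'(t)\le A'$ with $A'\le A$ gives $M'\le A$, and Step 4 then runs verbatim.

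Step 3 closes in a few lines. Derived directly from \eqref{floc}, the nonlocal form is
\[
u_t+u^2u_x-\tfrac13u_x^3=-\p_x p*\bigl(\tfrac23u^3+uu_x^2\bigr)-\tfrac13\,p*u_x^3,\qquad p=\tfrac12e^{-|x|}.
\]
Note that the coefficient of the $\p_x(1-\p_x^2)^{-1}$ term is $1$, not the $\tfrac13$ printed in \eqref{fnonloc}; nothing below depends on this. Differentiating in $x$ and using $\p_x^2p*f=p*f-f$, the second-order terms assemble exactly into $(u^2-u_x^2)u_{xx}$. So no $u_{xx}$ survives along the characteristic:
\[
M'(t)=\Bigl[\tfrac23u^3-uu_x^2-p*\bigl(\tfrac23u^3+uu_x^2\bigr)-\tfrac13\,p_x*u_x^3\Bigr]\bigl(t,q(t,x_0)\bigr).
\]
Since $u\ge0$, the terms $-uu_x^2$ and $-p*(\tfrac23u^3+uu_x^2)$ are nonpositive and can simply be dropped. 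By $|u_x|\le u$, $|p_x*u_x^3|\le\|p_x\|_{L^1}\|u_x\|_{L^\infty}^3\le\|u\|_{L^\infty}^3$. Combining this with your Step 2,
\[
M'(t)\le\|u(t)\|_{L^\infty}^3\le\frac{\|w_0\|_{H^1}^3}{2\sqrt2}<\sqrt2\,\|w_0\|_{H^1}^3=A .
\]
The bound therefore holds with room to spare, and you need neither $u^2-u_x^2\ge0$ nor the Camassa--Holm convolution inequality.

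Two small points remain. First, in Step 4 the remark that $M<0$ is irrelevant. The argument is simply that $0<1/\mu(t)\le g(t)$ on $[0,T)$ with $g(t_*)=0$. So $T>t_*$ is impossible, because $\mu(t_*)=z_0(x_0)/q_x(t_*,x_0)$ would then be finite and positive. Second, for $5/2<s\le 7/2$ the function $m$ is only continuous. The identity $m(t,q)q_x=z_0$ should therefore be justified in weak form, by pairing $m_t=-((u^2-u_x^2)m)_x$ with $\phi\circ q^{-1}$, or by density; this is routine.
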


Finally, we record two technical lemmas from \cite{GLM19}. The first combines Besov and Sobolev norms.
\begin{lemma}
If $w\in H^2(\R)$, then 
\begin{equation*}
\|w_x\|_{L^\infty}\lesssim 1+\|w\|_{B^1_{\infty,\infty}}\log_2(2+\|w\|^2_{H^2}).
\end{equation*}
holds true.
\label{wx-li}
\end{lemma}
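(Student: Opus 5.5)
The plan is the standard Brezis--Gallouët-type argument: split $w_x$ into low and high Littlewood--Paley frequencies. On the low frequencies the $B^1_{\infty,\infty}$ norm controls each dyadic block, and the number of blocks gives a logarithmic factor. On the high frequencies the $H^2$ norm gives geometric decay. The cut-off is then chosen so that the high-frequency tail is $O(1)$.

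Since $w\in H^2(\R)$, we have $w_x\in H^1(\R)$. The decomposition $w_x=\sum_{k\ge0}P_k w_x$ converges in $H^1$, hence in $L^\infty$ by the embedding $H^1\subset L^\infty$. Fix an integer $N\ge 0$, to be chosen, and estimate
\[
\|w_x\|_{L^\infty}\le \sum_{k\le N}\|\p_x P_k w\|_{L^\infty}+\sum_{k>N}\|\p_x P_k w\|_{L^\infty}.
\]

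\emph{Low frequencies.} The Fourier transform of $P_k w$ is supported in $|\xi|\lesssim 2^k$. Bernstein's inequality then gives $\|\p_x P_k w\|_{L^\infty}\lesssim 2^k\|P_k w\|_{L^\infty}$, uniformly in $k\ge0$ (including $k=0$). Formally, one writes $\p_x P_k=\p_x\widetilde P_k P_k$ with a slightly fattened projector $\widetilde P_k$ whose kernel $K_k$ satisfies $\|\p_x K_k\|_{L^1}\lesssim 2^k$. Hence each low block is bounded by $C\|w\|_{B^1_{\infty,\infty}}$, and the low-frequency sum is at most $C(N+1)\|w\|_{B^1_{\infty,\infty}}$.

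\emph{High frequencies.} For $k\ge1$, Bernstein's inequality from $L^2$ to $L^\infty$ gives $\|P_k f\|_{L^\infty}\lesssim 2^{k/2}\|P_k f\|_{L^2}$. By Plancherel, since $|\xi|\sim 2^k$ on the support,
\[
\|\p_x P_k w\|_{L^2}\lesssim 2^{-k}\|P_k w\|_{H^2}\le 2^{-k}\|w\|_{H^2}.
\]
Thus $\|\p_x P_k w\|_{L^\infty}\lesssim 2^{-k/2}\|w\|_{H^2}$, and summing over $k>N$ gives a bound $C\,2^{-N/2}\|w\|_{H^2}$.

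\emph{Choice of $N$.} Take $N=\lceil \log_2(2+\|w\|^2_{H^2})\rceil$. Then
\[
2^{-N/2}\|w\|_{H^2}\le \frac{\|w\|_{H^2}}{(2+\|w\|_{H^2}^2)^{1/2}}\le 1.
\]
Also $N+1\le 2+\log_2(2+\|w\|_{H^2}^2)\le 3\log_2(2+\|w\|_{H^2}^2)$, because the logarithm is at least $1$. Combining the two pieces yields
\[
\|w_x\|_{L^\infty}\lesssim 1+\|w\|_{B^1_{\infty,\infty}}\log_2(2+\|w\|_{H^2}^2).
\]

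There is no serious obstacle here. The only points needing care are the uniform Bernstein bounds for $\p_x P_k$, including the low block $P_0$, and the justification of convergence of the dyadic series in $L^\infty$; both are routine.
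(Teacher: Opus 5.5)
Your proof is correct. The splitting at $N=\lceil\log_2(2+\|w\|_{H^2}^2)\rceil$ works: the low blocks satisfy $\|\p_x P_k w\|_{L^\infty}\lesssim 2^k\|P_k w\|_{L^\infty}\le\|w\|_{B^1_{\infty,\infty}}$ via a fattened projector, including $k=0$. The high blocks satisfy $\|\p_x P_k w\|_{L^\infty}\lesssim 2^{-k/2}\|w\|_{H^2}$, which is justified because $|\xi|\sim 2^k$ on the support of each $k\ge1$ multiplier and the multipliers $\varphi(\xi/2^k)-\varphi(\xi/2^{k-1})$ lie in $[0,1]$ since $\varphi$ is radially decreasing.

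The paper does not prove this lemma; it imports it from \cite{GLM19}. Your Brezis--Gallou\"et-type low/high frequency decomposition is the standard way such logarithmic estimates are obtained, so there is no competing argument in the paper to compare against.
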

The second is a logarithmic Gronwall-type inequality. 
\begin{lemma}
Let $A\in C^1([0,T);\R^+)$, $0<T\leq \infty$, satisfy 
\[
\frac{d}{dt} A(t)\leq B\, A(t)\ln(2+A(t)), \qquad \forall\,t\in [0,T),
\]
for some constant $B>0$. Then
\[
A(t)\leq (2+A(0))^{e^{Bt}}, \qquad \forall\,t\in [0,T).
\] 
\label{gron}
\end{lemma}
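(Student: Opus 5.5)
The statement just made is Lemma~\ref{gron}, the logarithmic Gronwall inequality. The plan is to reduce it to a linear differential inequality for a doubly logarithmic quantity.

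Set $F(t):=\ln\ln(2+A(t))$. Since $A\ge 0$, we have $2+A\ge 2$, so $\ln(2+A)\ge \ln 2>0$ and $F$ is well defined. Because $A\in C^1$, $F$ is $C^1$ on $[0,T)$.

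The first step is to differentiate $F$ and use the hypothesis:
\[
F'(t)=\frac{A'(t)}{(2+A(t))\ln(2+A(t))}\le \frac{B\,A(t)\ln(2+A(t))}{(2+A(t))\ln(2+A(t))}=\frac{B\,A(t)}{2+A(t)}\le B.
\]
Here we used that the denominator is positive and that $0\le A/(2+A)<1$.

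The second step is to integrate from $0$ to $t$, which gives $F(t)\le F(0)+Bt$. This is additive growth, not multiplicative, so it is weaker than what we need.

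The third step recovers exponential growth by a slightly different choice of function. Set $G(t):=\ln(2+A(t))$. Then
\[
G'(t)=\frac{A'}{2+A}\le \frac{B A}{2+A}\,G(t)\le B\,G(t).
\]
Here we used $G>0$ and $A/(2+A)\le 1$.

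Now apply the standard Gronwall lemma to $G$: the function $e^{-Bt}G(t)$ has nonpositive derivative. Hence $G(t)\le e^{Bt}G(0)$, that is, $\ln(2+A(t))\le e^{Bt}\ln(2+A(0))$. Exponentiating gives
\[
2+A(t)\le (2+A(0))^{e^{Bt}},
\]
and since $A\le 2+A$, this yields $A(t)\le (2+A(0))^{e^{Bt}}$ for all $t\in[0,T)$.

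There is no genuine obstacle. The only point requiring care is the choice of $\ln(2+A)$ rather than $\ln\ln(2+A)$, so that the Gronwall argument produces the exponent $e^{Bt}$. The positivity of $2+A$ and of its logarithm justifies the divisions in both derivative computations.
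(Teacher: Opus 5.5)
Your proof is correct. For $G=\ln(2+A)>0$ you get $G'\le \frac{BA}{2+A}\,G\le BG$, and the integrating factor $e^{-Bt}$ gives $\ln(2+A(t))\le e^{Bt}\ln(2+A(0))$. This yields the claim, and even the slightly sharper $A(t)\le (2+A(0))^{e^{Bt}}-2$. The paper does not prove Lemma~\ref{gron} itself; it quotes it from \cite{GLM19}, so there is no competing argument in the text, and yours is the standard one.

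One remark in your write-up is wrong, though it does no harm. The bound $F(t)\le F(0)+Bt$ for $F=\ln\ln(2+A)$ is not weaker than what you need. Exponentiating it gives exactly $\ln(2+A(t))\le e^{Bt}\ln(2+A(0))$. Indeed $F'=G'/G$ with $G>0$, so $F'\le B$ and $G'\le BG$ are the same inequality. Your first two steps already finish the proof and the third step repeats them. You should also drop the closing comment that one must use $\ln(2+A)$ rather than $\ln\ln(2+A)$ to obtain the exponent $e^{Bt}$.
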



\section{Construction of the initial data}
This construction is inspired by the one introduced for the Novikov equation in \cite{GLM19}. Let $\phi$ be a fixed nonnegative even Schwartz function satisfying $\phi(0)=1$ (e.g., $\phi(x)=e^{-x^2}$), and define
\[
a_j(x):=\frac1j\,\phi(2^j x), \quad \forall\ j\geq 1, \ x\in \R.
\] 
Let also $\psi\in C_c^\infty(\R)$ be a fixed nonnegative function with supp\, $\psi\subseteq (1,2)$. Using these functions, we define a sequence of initial data $(u_{0,K})_{K\geq 1}$ by
\begin{equation*}
u_{0,K}(x):= (1-\partial_x^2)^{-1}\left(\psi(x)+\sum_{1\leq j\leq K} a_j(x)\right),  \quad \forall\ K\geq 1, \ x\in \R.
\end{equation*}
Setting 
\begin{equation}
y_{0,K}(x):= (1-\partial_x^2)u_{0,K}(x)=\psi(x)+\sum_{1\leq j\leq K} a_j(x),  \quad \forall\ K\geq 1, \ x\in \R,\label{y0K}
\end{equation}
we see that each $y_{0,K}$ is a real-valued, nonnegative Schwartz (and therefore belongs to $H^\infty$) function. Since $\psi(0)=0$ and $\phi(0)=1$,
\begin{equation*}
y_{0,K}(0)= \sum_{1\leq j\leq K} \frac1j, \quad \forall\ K\geq 1,
\end{equation*}
and consequently
\begin{equation}
\lim_{K\to \infty} y_{0,K}(0)= \infty.\label{yi}
\end{equation}

Since 
\[
u_{0,K}=G*y_{0,K},
\]
where
\[
G(x)=\frac12 e^{-|x|}
\]
is the Green's function of $(1-\partial_x^2)^{-1}$ on $\mathbb R$, each $u_{0,K}$ is also a real-valued, nonnegative $H^\infty$ function. Differentiating the convolution formula gives
\[\aligned
u'_{0,K}(0)&= \frac{1}{2}\int_0^\infty e^{-\xi}\,(y_{0,K}(-\xi)- y_{0,K}(\xi))\,d\xi\\
&= \frac{1}{2}\int_0^\infty e^{-\xi}\,(\psi(-\xi)- \psi(\xi))\,d\xi=- \frac{1}{2}\int_1^2 e^{-\xi}\, \psi(\xi)\,d\xi.
\endaligned
\]
Here we used the evenness of the functions $a_j$ and the support condition on $\psi$. Since $\psi$ is nonnegative and is not identically zero, 
\begin{equation*}
-u'_{0,K}(0)\gtrsim 1,
\end{equation*} 
uniformly in $K$. Combining this  with \eqref{yi}, we obtain
\begin{equation}
\lim_{K\to\infty} u_{0, K}'(0)y_{0, K}(0)=-\infty,\label{lim-1}
\end{equation}
and
\begin{equation}
\lim_{K\to\infty} (u_{0, K}'(0))^2y_{0, K}(0)=\infty,\label{lim-2}
\end{equation}
The next lemma shows that the sequence $(u_{0,K})_{K\geq 1}$ remains uniformly bounded in $H^{5/2}$.

\begin{lemma}
The estimate 
\begin{equation}
\|u_{0, K}\|_{H^{5/2}}\lesssim 1\label{52}
\end{equation}
holds uniformly in $K$.
\end{lemma}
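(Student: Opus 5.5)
We reduce the estimate to a statement about $y_{0,K}$ and then exploit the almost-orthogonality in frequency of the dyadic bumps $a_j$. Since $u_{0,K}=(1-\partial_x^2)^{-1}y_{0,K}$, we have $\|u_{0,K}\|_{H^{5/2}}=\|y_{0,K}\|_{H^{1/2}}$. The contribution of $\psi$ is a fixed finite quantity, because $\psi\in C_c^\infty$, so it suffices to prove
\[
\Bigl\|\sum_{1\le j\le K} a_j\Bigr\|_{H^{1/2}}\lesssim 1
\]
uniformly in $K$. A crude triangle inequality is not enough: each $a_j$ has $\dot H^{1/2}$ norm of size $1/j$ (by scaling), and $\sum 1/j$ diverges. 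We must therefore use that the $a_j$ live essentially at distinct dyadic frequencies $2^j$, so that the sum behaves like an $\ell^2$ sum, and $\sum 1/j^2<\infty$.

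The first step is the scaling computation. We have $\widehat{a_j}(\xi)=j^{-1}2^{-j}\widehat\phi(2^{-j}\xi)$. Because $\phi$ is Schwartz, $\widehat\phi$ is Schwartz, so $|\widehat\phi(\eta)|\lesssim_N \langle\eta\rangle^{-N}$. The second step uses the Besov characterization $H^{1/2}=B^{1/2}_{2,2}$ recalled in the Toolbox:
\[
\|f\|_{H^{1/2}}^2\sim\sum_k 2^{k}\|P_kf\|_{L^2}^2 .
\]
We estimate $\|P_k a_j\|_{L^2}$ via Plancherel. Since $P_k$ localizes to $|\xi|\sim 2^k$ (or $|\xi|\lesssim1$ for $k=0$),
\[
\|P_k a_j\|_{L^2}^2\lesssim j^{-2}2^{-2j}\int_{|\xi|\sim 2^k}|\widehat\phi(2^{-j}\xi)|^2\,d\xi\lesssim j^{-2}2^{-j}\min\bigl(2^{k-j},2^{-N(k-j)}\bigr).
\]
For $k\le j$ we use the boundedness of $\widehat\phi$ and the measure $2^k$ of the annulus; for $k>j$ we use the Schwartz decay. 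Hence
\[
2^{k/2}\|P_k a_j\|_{L^2}\lesssim j^{-1}2^{-|k-j|/2},
\]
with even better decay for $k>j$.

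The third step, which is the main point, is the summation. By the triangle inequality in $j$ for each fixed $k$,
\[
2^{k/2}\Bigl\|P_k\sum_j a_j\Bigr\|_{L^2}\lesssim\sum_{j\ge1} j^{-1}2^{-|k-j|/2}=:(c*w)_k ,
\]
where $c_j=j^{-1}\in\ell^2$ and $w_m=2^{-|m|/2}\in\ell^1$. Young's inequality $\ell^2*\ell^1\subset\ell^2$ then gives
\[
\Bigl\|\sum_j a_j\Bigr\|_{H^{1/2}}\lesssim\|c\|_{\ell^2}\|w\|_{\ell^1}\lesssim 1,
\]
independently of $K$. Together with the fixed bound on $\psi$, this proves \eqref{52}.

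The only delicate point is the low-frequency tail for $k<j$: there the gain $2^{(k-j)/2}$ comes solely from the smallness of the measure of the annulus relative to the natural frequency scale $2^j$. This works only because the exponent is $1/2>0$; at regularity $0$ there would be no such decay.
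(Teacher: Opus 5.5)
Your proof is correct. It reaches the same almost-orthogonality conclusion as the paper, but organizes it differently.

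The paper argues bilinearly. It bounds the Gram entries $|\langle a_j,a_k\rangle_{H^{1/2}}|\lesssim 2^{-|j-k|}/(jk)$ by splitting the integral of $(1+2^{2j}|\eta|^2)^{1/2}\widehat\phi(\eta)\overline{\widehat\phi(2^{j-k}\eta)}$ into three frequency regions. It then sums $\sum_{j,k}\langle a_j,a_k\rangle_{H^{1/2}}$ by Cauchy--Schwarz and Young.

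You instead pass to the $B^{1/2}_{2,2}$ characterization. Only the upper bound $\|f\|_{H^{1/2}}^2\lesssim\sum_k 2^k\|P_kf\|_{L^2}^2$ is needed, and it follows from the finite overlap of the Littlewood--Paley multipliers. You then estimate one function at a time, block by block, so only a single rescaled $\widehat\phi$ in two regimes appears, and the summation reduces to Young's inequality $\ell^2*\ell^1\subset\ell^2$.

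What each route buys:
\begin{itemize}
\item Yours is shorter and has no cross terms. It also makes transparent that the lemma is exactly the fact $(1/j)\in\ell^2$. In $\ell^1_k$ the same bookkeeping would need $(1/j)\in\ell^1$, and indeed these data are unbounded in $B^{5/2}_{2,1}$, since $B^{1/2}_{2,1}\hookrightarrow L^\infty$ while $y_{0,K}(0)\to\infty$.
\item The paper's stays within the Fourier-weight definition of $H^{1/2}$ and needs no Littlewood--Paley machinery.
\end{itemize}

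Two harmless slips:
\begin{itemize}
\item For $k\le j$, your own displayed bounds give $2^{k/2}\|P_ka_j\|_{L^2}\lesssim j^{-1}2^{-(j-k)}$, which is better than the stated $2^{-(j-k)/2}$.
\item Your closing remark is inaccurate. At regularity $s$ the relative low-frequency gain is $2^{-(j-k)(s+1/2)}$: the annulus measure contributes $2^{-(j-k)/2}$ and the weight contributes $2^{-(j-k)s}$. So the decay persists at $s=0$, and the gain does not come solely from the measure.
\end{itemize}
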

\begin{proof}
Since 
\[
\|u_{0, K}\|_{H^{5/2}}\sim \|y_{0, K}\|_{H^{1/2}},
\]
it suffices, by \eqref{y0K}, to prove that
\begin{equation*}
\left\|\sum_{1\leq j\leq K} a_j\right\|_{H^{1/2}}\lesssim 1\label{12}
\end{equation*}
uniformly in $K$. 

To this end, we claim that
\begin{equation}
\left|\langle a_j, a_k\rangle_{H^{1/2}}\right|\lesssim\frac{2^{-|j-k|}}{jk}, \quad\forall\ 1\leq j,\,k.\label{jk}
\end{equation}
Indeed, assuming \eqref{jk},
\begin{equation*}
\left\|\sum_{1\leq j\leq K} a_j\right\|^2_{H^{1/2}}= \sum_{1\leq j,\,k\leq K} \langle a_j, a_k\rangle_{H^{1/2}}\lesssim \sum_{1\leq j,\,k\leq K} \frac{2^{-|j-k|}}{jk}\leq \left\|\frac1p\right\|^2_{l^2_p}\, \|2^{-|q|}\|_{l^1_q}\lesssim 1,
\end{equation*}
where we used Cauchy-Schwarz followed by Young's inequality.

It therefore remains to establish \eqref{jk}. By symmetry,  we may assume $j\geq k$. A direct computation yields
\begin{equation*}
\left|\langle a_j, a_k\rangle_{H^{1/2}}\right|\sim \frac{1}{jk\,2^k}\left|\int_\R (1+2^{2j}|\eta|^2)^{1/2}\,\widehat{\phi}(\eta)\,\overline{\widehat{\phi}(2^{j-k}\eta)}\,d\eta\right|.
\end{equation*}
Since $\phi$ is a Schwartz function, we estimate the integral over the three regions
\[
1\leq |\eta|, \qquad 2^{k-j}\leq |\eta|\leq 1, \qquad |\eta|\leq 2^{k-j},
\]
obtaining
\[\aligned
\frac{1}{jk\,2^k}&\left|\int_{1\leq |\eta|} (1+2^{2j}|\eta|^2)^{1/2}\,\widehat{\phi}(\eta)\,\overline{\widehat{\phi}(2^{j-k}\eta)}\,d\eta\right|\\
&\qquad\qquad\lesssim \frac{1}{jk\,2^k}\int_{|\eta|>1} 2^{j}|\eta|\,\frac{1}{|\eta|}\,\frac{1}{2^{2(j-k)}|\eta|^2}\,d\eta\sim \frac{2^{-(j-k)}}{jk},
\endaligned
\]
\[\aligned
\frac{1}{jk\,2^k}&\left|\int_{2^{k-j}\leq |\eta|\leq 1} (1+2^{2j}|\eta|^2)^{1/2}\,\widehat{\phi}(\eta)\,\overline{\widehat{\phi}(2^{j-k}\eta)}\,d\eta\right|\\
&\qquad\qquad\lesssim \frac{1}{jk\,2^k}\int_{2^{k-j}\leq |\eta|\leq 1} 2^{j}|\eta|\,\frac{1}{2^{3(j-k)}|\eta|^3}\,d\eta\sim \frac{2^{-(j-k)}}{jk},\endaligned
\]
and
\[\aligned
\frac{1}{jk\,2^k}&\left|\int_{|\eta|\leq 2^{k-j}} (1+2^{2j}|\eta|^2)^{1/2}\,\widehat{\phi}(\eta)\,\overline{\widehat{\phi}(2^{j-k}\eta)}\,d\eta\right|\\
&\qquad\qquad\lesssim \frac{1}{jk\,2^k}\int_{|\eta|\leq 2^{k-j}} (1+2^{j}|\eta|)\,d\eta\lesssim \frac{2^{-(j-k)}}{jk}.\endaligned
\]
These estimates prove\eqref{jk}, completing the proof. 
\end{proof}

Combining \eqref{lim-2} with \eqref{52}, we conclude that
\begin{equation}
\left(u_{0, K}'(0)\right)^2y_{0, K}(0)>\sqrt{2}\|u_{0,K}\|^3_{H^1}\label{ineq-1}
\end{equation}
for all sufficiently large $K$.

Fix $\epsilon>0$ and define
\begin{equation*}
v_{0, K}(x):=\epsilon \,u_{0,K}(x).
\end{equation*}
The preceding properties immediately imply the following.

\begin{itemize}

\item For every $K\geq1$, $v_{0,K}$ is real-valued and belongs to $H^\infty$;

\item Writing $\tilde{y}_{0,K}=(1-\partial^2_x)v_{0,K}$, we have 
\begin{equation}\label{ty0}
\tilde{y}_{0, K}\geq 0\ \text{on}\ \R, \qquad \tilde{y}_{0, K}(0)>0;
\end{equation}

\item Uniformly in $K$, 
\begin{equation}
\|v_{0, K}\|_{H^{5/2}}\lesssim \epsilon,\label{52v}
\end{equation}
and
\begin{equation}
\lim_{K\to\infty} v_{0, K}'(0)\tilde{y}_{0, K}(0)=-\infty,\label{limv}
\end{equation}
while, for all sufficiently large $K$,
\begin{equation}
\left(v_{0, K}'(0)\right)^2\tilde{y}_{0, K}(0)>\sqrt{2}\|v_{0,K}\|^3_{H^1}.\label{ineqv}
\end{equation}

\end{itemize}


\section{Main argument}

First, thanks to the properties established for $v_{0,K}$, in particular \eqref{ty0}, \eqref{limv}, and \eqref{ineqv}, we may apply Theorem \ref{blow-th} with
\[
(w_0, x_0)=(v_{0,K}, 0),
\]
provided $K$ is sufficiently large. It follows that the corresponding solution of the Cauchy problem \eqref{cp} blows up in finite time $T$, where 
\begin{equation*}
\aligned
T&\leq \frac{-w'_0(x_0)}{\sqrt{2}\|w_0\|^3_{H^1}}- \frac{1}{2}\sqrt{\left(\frac{\sqrt{2}w'_0(x_0)}{\|w_0\|^3_{H^1}}\right)^2-\frac{2\sqrt{2}}{\|w_0\|^3_{H^1}\,z_0(x_0)}}\\
&=\frac{\frac{1}{\sqrt{2}\|w_0\|^3_{H^1}\,z_0(x_0)}}{\frac{-w'_0(x_0)}{\sqrt{2}\|w_0\|^3_{H^1}}+ \frac{1}{2}\sqrt{\left(\frac{\sqrt{2}w'_0(x_0)}{\|w_0\|^3_{H^1}}\right)^2-\frac{2\sqrt{2}}{\|w_0\|^3_{H^1}\,z_0(x_0)}}}\\
&\leq \frac{\frac{1}{\sqrt{2}\|w_0\|^3_{H^1}\,z_0(x_0)}}{\frac{-w'_0(x_0)}{\sqrt{2}\|w_0\|^3_{H^1}}}=\frac{1}{-w'_0(x_0)z_0(x_0)}.
\endaligned
\end{equation*}
In view of  \eqref{limv}, we may increase $K$ further so that
\[
T<\epsilon.
\]  

We now fix this choice of $K$. This is the initial datum associated with the prescribed $\epsilon>0$ for which we establish Theorem \ref{main}. Since $v_{0,K}\in H^\infty$, Theorem \ref{wp-th} yields
\[
u\in C([0,T); H^\infty(\R)).
\]
By \eqref{52v}, we have
\[
\|u_{0}\|_{H^{5/2}}=\|v_{0,K}\|_{H^{5/2}}\lesssim \epsilon.
\] 
Therefore, it remains only to prove that 
\[
\limsup_{t\to T-}\|u(t)\|_{H^{5/2}}=\infty.
\]

For this purpose, we first derive suitable energy estimates. Using the local formulation \eqref{floc} of the FORQ equation, we multiply it by $m$ and integrate by parts. Since $u\in C([0,T); H^\infty(\R))$, all computations are justified, and we obtain
\begin{equation*}
\frac{d}{dt}\int_{\R}m^2\,dx=-2\int_{\R} u_x\,m^3\,dx.
\end{equation*}
Differentiating \eqref{floc} with respect to $x$, multiplying by $m_x$, and integrating by parts similarly gives
\begin{equation*}
\frac{d}{dt}\int_{\R}m_x^2\,dx= \frac{4}{3}\int_{\R} u_x\,m^3\,dx- 10\int_{\R} u_x\,m\,m_x^2\,dx.
\end{equation*}
Adding these identities yields
\begin{equation}\aligned
\frac{d}{dt}\left\{\|m(t)\|^2_{H^1}\right\}&=-\int_{\R} u_x\,m\left(\frac{2}{3}\,m^2+10\,m_x^2\right)dx\\&\leq 10 \|u_x(t)m(t)\|_{L^\infty}\|m(t)\|^2_{H^1}.
\endaligned\label{y-en}
\end{equation}
Since
\begin{equation}
\|m(t)\|_{H^1}\sim \|u(t)\|_{H^3},
\label{y-u}
\end{equation}
this estimate is well suited for controlling the $H^3$ norm.

It therefore remains to estimate $\|u_x(t)m(t)\|_{L^\infty}$. By Lemma \ref{wx-li}, together with the fact that $B^1_{\infty,\infty}(\R)$ and $H^2(\R)$ are Banach algebras, 
\begin{equation*}
\aligned
\|u_x(t)&m(t)\|_{L^\infty}\lesssim \|(u^2(t))_x\|_{L^\infty}+\|(u_x^2(t))_x\|_{L^\infty}\\ 
&\lesssim
1+\|u^2(t)\|_{B^1_{\infty,\infty}}\log_2(2+\|u(t)^2\|^2_{H^2})+\|u_x^2(t)\|_{B^1_{\infty,\infty}}\log_2(2+\|u^2_x(t)\|^2_{H^2})\\
&\lesssim
1+\|u(t)\|^2_{B^1_{\infty,\infty}}\log_2(2+\|u(t)\|^4_{H^2})+\|u_x(t)\|^2_{B^1_{\infty,\infty}}\log_2(2+\|u_x(t)\|^4_{H^2})\\
&\lesssim
\left(1+\|u(t)\|^2_{B^2_{\infty,\infty}}\right)\log_2(2+\|u(t)\|^2_{H^3}).
\endaligned
\end{equation*}

Combining this estimate with \eqref{y-en} and \eqref{y-u} gives
\begin{equation}
\aligned
\frac{d}{dt}\left\{\|m(t)\|^2_{H^1}\right\}&\lesssim \left(1+\|u(t)\|^2_{B^2_{\infty,\infty}}\right)\log_2(2+\|u(t)\|^2_{H^3})\|m(t)\|^2_{H^1}\\
&\sim \left(1+\|u(t)\|^2_{B^2_{\infty,\infty}}\right)\log_2(2+\|m(t)\|^2_{H^1})\|m(t)\|^2_{H^1}.
\endaligned
\label{diff-ineq}
\end{equation}

We now claim that
\begin{equation}
\limsup_{t\to T-}\|u(t)\|_{B^2_{\infty,\infty}}=\infty.\label{b1ii}
\end{equation}
Suppose instead that this fails. Then there exists a constant $M=M(\epsilon)$  such that
\[
\|u(t)\|_{B^2_{\infty,\infty}}\leq M, \qquad \forall\,t\in [0,T).
\] 
Substituting this bound into \eqref{diff-ineq} yields
\begin{equation*}
\frac{d}{dt}\left\{\|m(t)\|^2_{H^1}\right\}\leq C(1+M^2)\log_2(2+\|m(t)\|^2_{H^1})\|m(t)\|^2_{H^1}, \qquad \forall\,t\in [0,T).
\end{equation*}
Applying Lemma \ref{gron} and using \eqref{y-u}, we conclude that 
\[
 \sup_{t\in [0,T)}\|u(t)\|_{H^3}\sim \sup_{t\in [0,T)}\|m(t)\|_{H^1}<\infty.
\]
This contradicts the blow-up criterion in Theorem \ref{wp-th}, according to which
\[
\limsup_{t\to T-}\|u(t)\|_{H^3}=\infty.
\]
Hence \eqref{b1ii} follows.
 
Finally, the Sobolev embedding for Besov spaces gives
\[
\|w\|_{B^2_{\infty,\infty}}\lesssim \|w\|_{B^{5/2}_{2,2}}\sim\|w\|_{H^{5/2}}.
\]
Combing this with \eqref{b1ii} immediately yields
\[
\limsup_{t\to T-}\|u(t)\|_{H^{5/2}}=\infty,
\]
which completes the proof of Theorem \ref{main}.


\section*{Acknowledgements}
The author is deeply grateful to God and the Holy Theotokos for Their love, patience, and mercy, which inspired him while working on this paper. 

He is also thankful to Alex Himonas for his friendship and for introducing and guiding him to work on Camassa-Holm-type equations. 

The author used OpenAI's ChatGPT during the exploratory phase of this work to assist in generating and refining candidate initial data for the equation under study. 

\bibliographystyle{amsplain}
\bibliography{bNE}

\end{document}